\crefname{p}{}{}
\newenvironment{clmproof}{\begin{proof}[Proof of Claim:]}{\end{proof}}
\DeclareMathOperator{\sep}{sn}
\DeclareMathOperator{\tw}{tw}
\DeclareMathOperator{\width}{width}
\DeclareMathOperator{\depth}{depth}
\DeclareMathOperator{\height}{height}
\DeclareMathOperator{\interior}{int}
\DeclareMathOperator{\boundary}{\partial}
\title{\MakeUppercase{Separation Number and Treewidth, Revisited}%
  \thanks{This research was partly funded by NSERC.}}
\author{Hussein Houdrouge%
  \thanks{School of Computer Science, Carleton University.},\quad
  Babak Miraftab\footnotemark[2],\quad
  and Pat Morin\footnotemark[2]}
\date{}
\begin{document}

\maketitle

\begin{abstract}
  We give a constructive proof of the fact that the treewidth of a graph $G$ is bounded by a linear function of the separation number of $G$.
\end{abstract}

\section{Introduction}

In this paper every graph $G$ is undirected and simple with vertex set $V(G)$ and edge set $E(G)$.  A \defin{tree decomposition} of a graph $G$ is a collection $\mathcal{T}:=(B_x:x\in V(T))$ of vertex subsets of $G$, called \defin{bags}, that is indexed by the vertices of a tree $T$ and such that
\begin{enumerate*}[label=(\roman*)]
  \item\label{covers_edges} for each $vw\in E(G)$, there exists $x\in V(T)$ such that $\{v,w\}\subseteq B_x$; and
  \item\label{connectivity} for each vertex $v$ of $G$, $T[\{x\in V(T): v\in B_x\}]$ is a non-empty (connected) subtree of $T$.
\end{enumerate*}
The \defin{width}, of a tree decomposition $\mathcal{T}:=(B_x:x\in V(T))$ is $\mathdefin{\width(\mathcal{T})}:=\max\{|B_x|:x\in V(T)\}-1$. The \defin{treewidth} of a graph $G$ is $\mathdefin{\tw(G)}:=\min\{\width(\mathcal{T}):\text{$\mathcal{T}$ is a tree decomposition of $G$}\}$.

A \defin{separation} $(A,B)$ of a graph $G$ is a pair of subsets of $V(G)$ with $A\cup B= V(G)$ and such that, for each edge $vw$ of $G$, $\{v,w\}\subseteq A$ or $\{v,w\}\subseteq B$.  The \defin{order} of a separation $(A,B)$ is $|A\cap B|$.  A separation $(A,B)$ is \defin{balanced} if $|A\setminus B|\le \tfrac{2}{3}|V(G)|$ and $|B\setminus A|\le \tfrac{2}{3}|V(G)|$.  The \defin{separation number} $\mathdefin{\sep(G)}$ of a graph $G$ is the minimum integer $a$ such that every subgraph of $G$ has a balanced separation of order at most $a$.

A short argument due to \citet{robertson.seymour:graph}, which has many generalizations, shows that for any graph $G$, $\sep(G)\le \tw(G)+1$ .  We reprove a weak converse of this fact, first proven by \citet{dvorak.norin:treewidth}.
\begin{thm}\label{main_result}
  There exists a constant $c$ such that, for every graph $G$, $\tw(G)\le c\cdot \sep(G)$.
\end{thm}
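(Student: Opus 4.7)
The plan is to prove, by induction on $|V(G)|$, the following strengthened statement: there exist constants $\alpha,\beta$ such that for every graph $G$ and every $W \subseteq V(G)$ with $|W| \le \alpha\cdot\sep(G)$, $G$ admits a tree decomposition of width at most $\beta\cdot\sep(G)$ in which some bag contains $W$. The main theorem follows by taking $W=\emptyset$.

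Write $s=\sep(G)$, and handle the base case $|V(G)| \le \beta s+1$ with the trivial one-bag decomposition. For the inductive step, the key subroutine is to locate a separation $(A,B)$ of $G$ of order at most $s$ that is \emph{$W$-balanced}, meaning $|W\cap(A\setminus B)|, |W\cap(B\setminus A)| \le \tfrac{2}{3}|W|$; when $|W| \le 3s$, every separation of order at most $s$ already qualifies. Given such a separation with $S := A\cap B$, apply induction to $G[A]$ with the required set $W_A := (W\cap A)\cup S$ and to $G[B]$ with $W_B := (W\cap B)\cup S$; both satisfy $|W_A|,|W_B| \le \tfrac{2}{3}|W|+s \le \alpha s$, provided $\alpha \ge 6$. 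Splice the two resulting tree decompositions together by attaching both to a new node whose bag is $W\cup S$; this bag has size $|W|+|S| \le (\alpha+1)s$, giving the required bound $\beta \ge \alpha+1$. Connectivity of the combined decomposition holds because $S$ lies in the new bag and in the root bags of both sub-decompositions, and every edge of $G$ lies inside $A$ or inside $B$ and is hence covered by one of the two sub-decompositions.

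The main obstacle, and the technical heart of the proof, is establishing the existence of a $W$-balanced separation of order at most $s$ in the regime $|W| > 3s$. The hypothesis $\sep(G) \le s$ supplies only \emph{vertex-balanced} separations of subgraphs of $G$; a priori it says nothing about balancing an arbitrary prescribed set $W$. To bridge this gap, I expect the proof must either (i)~invoke the separation-number hypothesis on a carefully chosen subgraph of $G$ in which $W$ constitutes the bulk of the vertex set, so that vertex-balance automatically implies $W$-balance, or (ii)~argue by contradiction, showing that the non-existence of a $W$-balanced separation of order $\le s$ yields a tangle-like obstruction which in turn exhibits a subgraph of $G$ with no vertex-balanced separation of order $\le s$. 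Executing either route while keeping the separator order linear in $s$ (rather than $O(s \log n)$, as in the naive repeated-balanced-separator argument) is the delicate step that distinguishes this theorem from its easier $\log n$ version.
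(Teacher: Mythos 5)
Your top-level induction (strengthened statement parametrized by a set $W$ that must live in one bag, split via a $W$-balanced separation, recurse on both sides, splice at a new root bag $W\cup S$) is sound scaffolding and matches the classical Robertson--Seymour argument (Theorem~\ref{w_balanced_theorem} here). However, the proposal has a genuine gap exactly where you flag one: you never establish that a $W$-balanced separation of order $O(s)$ exists when $|W|>3s$, and this is the entire content of the theorem. (A smaller error: your claim that when $|W|\le 3s$ ``every separation of order at most $s$ already qualifies'' is false as stated --- a separation with $W\subseteq A\setminus B$ has $|W\cap(A\setminus B)|=|W|$. In that regime you should instead take an ordinary vertex-balanced separation and note $|W_A|\le|W|+s\le 4s$; the bookkeeping still closes, but the statement as written is wrong.)

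Of the two routes you speculate about, the paper is closer in spirit to~(i), but the mechanism is not ``$W$ constitutes the bulk of the vertex set.'' Instead, the paper repeatedly applies Menger's Theorem to build a nested sequence $W=W_0\subseteq W_1\subseteq\cdots\subseteq W_{\ell+1}$ in which each layer is reached from $W$ by a full set of $|W|$ pairwise vertex-disjoint paths inside $G[W_i]$, plus a small separator $Z$ (of size $<|W|$) cutting $W$ off from $V(G)\setminus W_\ell$. In $H:=G[W_{\ell+1}]$, $W$ is only a $1/(\ell+2)$ fraction of the vertices, but because it is \emph{linked} to every layer by disjoint paths, any balanced separation $(A,B)$ of $H$ can only place $O(|A\setminus B|/\ell + |A\cap B|)$ vertices of $W\cup Z$ on the $A$ side (Lemma~\ref{z_w_bound}). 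Crucially the paper does not extract a single $W$-balanced separation of $G$ at all: it builds a bounded-height ($h=4$) tree decomposition of $H$ by repeated balanced separations, uses the linkage bound to show the $W\cup Z$ mass inside every node is small, applies the induction hypothesis to each leaf piece and to the $X$-side across $Z$, and glues. So the decomposition shape is fundamentally different from your single-split-and-recurse scheme. Your proposal correctly locates the difficulty and sketches a plausible shell, but the entire technical core --- the Menger-based $W$-sequence and the linkage-versus-balanced-separation bound --- is missing.
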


To put \cref{main_result} into context, consider the following classic result of \citet{robertson.seymour:graph}.
Let $W$ be a subset of the vertices in a graph $G$.  A separation $(A,B)$ of $G$ is \defin{$W$-balanced} if $|(A\setminus B)\cap W|\le \tfrac{2}{3}|W|$ and $|(B\setminus A)\cap W|\le \tfrac{2}{3}|W|$.

\begin{thm}[\citet{robertson.seymour:graph}]\label{w_balanced_theorem}
    Let $G$ be a graph with the property that, for each $W\subseteq V(G)$, $G$ has a $W$-balanced separation of order at most $a$. Then  $\tw(G)<4a$.
\end{thm}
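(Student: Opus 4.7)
I plan to prove a slightly stronger statement by induction on $|V(G)|$: for every $X\subseteq V(G)$ with $|X|\le 3a$, $G$ admits a tree decomposition of width less than $4a$ in which some bag contains $X$. Setting $X=\emptyset$ then yields the theorem.

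The base case $|V(G)|\le 4a$ is immediate by using a single bag equal to $V(G)$. For the inductive step $|V(G)|>4a$, pad $X$ to a set $W\subseteq V(G)$ with $|W|=3a$, and apply the hypothesis to $W$ to obtain a $W$-balanced separation $(A,B)$ of order at most $a$. Form the root bag
\[
R \;:=\; W\cup(A\cap B),
\]
of size at most $3a+a=4a$. Recurse on $G[A]$ with $X_A:=(W\cap A)\cup(A\cap B)$ and on $G[B]$ with $X_B:=(W\cap B)\cup(A\cap B)$; the $W$-balance bound gives
\[
|X_A| \;=\; |W\cap(A\setminus B)| + |A\cap B| \;\le\; \tfrac{2}{3}(3a) + a \;=\; 3a,
\]
and symmetrically $|X_B|\le 3a$, so the inductive hypothesis applies. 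Glue the two returned decompositions to $R$ by inserting a tree edge from $R$ to the bag in each child that is guaranteed by induction to contain $X_A$ or $X_B$.

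Verifying the tree-decomposition axioms is routine: every edge of $G$ is internal to $G[A]$ or $G[B]$ by the separation property and hence is covered by induction; for connectivity, a vertex of $A\cap B$ lies in $R$ and in the two children's contact bags, while a vertex in $A\setminus B$ (resp.\ $B\setminus A$) appears only in the $A$-subtree (resp.\ $B$-subtree) and, if it lies in $W$, also in $R$.

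The main obstacle is termination: the recursion requires $|V(A)|,|V(B)|<|V(G)|$, which fails precisely when $(A,B)$ is \emph{degenerate}, meaning $A=V(G)$ or $B=V(G)$. The hypothesis does not forbid degeneracy, because for any $W$ of size $3a$ and any $B\subseteq W$ of size $a$ the pair $(V(G),B)$ is trivially a $W$-balanced separation of order $a$. The substantive step is thus to show that whenever $|V(G)|>4a$ one can always choose a non-degenerate $W$-balanced separation of order at most $a$. One route is to apply the hypothesis to $V(G)$ itself, obtaining a separation $(A^{*},B^{*})$ which, by $V(G)$-balance together with $|A^{*}\cap B^{*}|\le a$ and $|V(G)|>4a$, satisfies $|A^{*}\setminus B^{*}|,|B^{*}\setminus A^{*}|\ge \tfrac{1}{3}|V(G)|-a>0$ and is therefore non-degenerate; one then needs to reconcile $(A^{*},B^{*})$, perhaps via an uncrossing argument with a $W$-balanced (possibly degenerate) separation, to extract a non-degenerate $W$-balanced separation of order at most $a$. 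Getting this last step right is the heart of the proof beyond the routine inductive bookkeeping.
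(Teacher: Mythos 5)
Your strategy---strengthening the statement to ``every $X\subseteq V(G)$ with $|X|\le 3a$ lies in some bag of a width-$<4a$ decomposition,'' padding $X$ to a set $W$, taking a $W$-balanced separation $(A,B)$, forming a root bag $W\cup(A\cap B)$, and recursing on $G[A]$ and $G[B]$ with the restricted seeds---is the same constructive idea the paper sketches for this theorem in its introduction, presented in a top-down recursive form rather than the paper's iterative bottom-up form. Your size bookkeeping (the $\le 3a$ bound on $X_A$, $X_B$ and the $\le 4a$ bound on the root bag) matches the paper's.

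You have, however, spotted a genuine gap that the paper's sketch passes over in silence. With $|W|=3a$ the hypothesis only promises \emph{some} $W$-balanced separation of order $\le a$, and the degenerate pair $(V(G),B)$ with $B$ any $a$-element subset of $W$ qualifies, since then $|(A\setminus B)\cap W|=|W\setminus B|=2a=\tfrac{2}{3}|W|$; such a separation makes $|A|=|V(G)|$ and the recursion stalls. Your instinct that filling this in is ``the heart of the proof'' is right. But the route you sketch is neither worked out nor the simplest: uncrossing a $V(G)$-balanced $(A^*,B^*)$ with a $W$-balanced $(A',B')$ only controls the \emph{sum} of the orders of $(A^*\cap A',B^*\cup B')$ and $(A^*\cup A',B^*\cap B')$ by $2a$, and whichever of the two has order $\le a$ need be neither $W$-balanced nor non-degenerate, so you would still have substantial work to do.

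The cleanest repair exploits integrality. Pad $X$ to a set $W$ with $|W|=3a+1$ rather than $3a$ (possible since $|V(G)|>4a\ge 3a+1$), and apply the hypothesis to this $W$. If the resulting $(A,B)$ were degenerate, say $A=V(G)$, then $B=A\cap B$ has $|B|\le a$, while $W$-balance forces $|W\cap B|=|W|-|(A\setminus B)\cap W|\ge \tfrac{1}{3}(3a+1)>a$, hence $|W\cap B|\ge a+1>|B|$, a contradiction; so $(A,B)$ is non-degenerate and both recursive instances are strictly smaller. Integrality likewise gives $|W\cap(A\setminus B)|\le \tfrac{2}{3}(3a+1)<2a+1$, hence $\le 2a$. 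Since $X\subseteq W$, this yields $|X_A|=|X\cap(A\setminus B)|+|A\cap B|\le 3a$ and symmetrically $|X_B|\le 3a$, exactly as you need. The only adjustment is that the root bag must be $X\cup(A\cap B)$, not $W\cup(A\cap B)$ (the latter could have $4a+1$ vertices); since $A\cap B$ is contained in $X_A$, $X_B$, and the root bag, the connectivity condition is still satisfied. With this modification your induction goes through as written.
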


The proof of \cref{w_balanced_theorem} is constructive and intuitive.  Indeed, a tree decomposition of $G$ can be constructed iteratively by an algorithm that maintains a separation $(X,Y)$ of order at most $3a$, and a tree decomposition $\mathcal{T}:=(B_x:x\in V(T))$ of $G[Y]$ of width less than $4a$ in which some bag $B_x$ contains all vertices in $W:=X\cap Y$.  To extend $\mathcal{T}$, the algorithm takes a $W$-balanced separation $(A,B)$ of $G[X]$ of order at most $a$ and creates a leaf $x'$ in $T$ adjacent to $x$ with $B_{x'}:=W\cup (A\cap B)$.\footnote{A $W$-balanced separation $(A,B)$ of $G[X]$ can be obtained from a $W$-balanced separation $(A',B')$ of $G$ by setting $(A,B):=(A'\cap X,B'\cap X)$.}  Note that $|B_{x'}|\le |W|+|A\cap B|\le 4a$, so the width of $\mathcal{T}$ is still less than $4a$.
Let $X_A:=A$, $Y_A:=Y\cup(A\cap B)$, $G_A:=G[X_A\cup Y_A]=G[Y\cup A]$ and $W_A:=X_A\cap Y_A$.  Then $\mathcal{T}$ is a tree decomposition $G[Y_A]$ of width less than $4a$ and $(X_A,Y_A)$ is a separation of $G_A$ of order $|W_A|\le|W\setminus B|+|A\cap B|\le \tfrac{2}{3}|W|+a\le 3a$ and a bag $B_{x'}$ of $\mathcal{T}$ contains $W_A$. The algorithm then inductively extends $\mathcal{T}$ to a tree decomposition of $G_A$ of width less than $4a$.  Let $X_B:=B$, $Y_B:=Y\cup A$, $G_B:=G$, and $W_B:=X_B\cap Y_B$.  Then $\mathcal{T}$ is a tree decomposition of $Y_B$ of width less than $4a$ and $(X_B,Y_B)$ is a separation of $G_B$ of order $|W_B|\le |W\cap B|+|A\cap B|\le\tfrac{2}{3}|W|+a\le 3a$ and a bag $B_{x'}$ of $\mathcal{T}$ contains $W_B$.  The algorithm finishes by inductively extending $\mathcal{T}$ to a tree decomposition of $G_B=G$ of width less than $4a$.

The challenge in establishing \cref{main_result} is that the balanced separations in the definition of separation number are only guaranteed to balance the entire set of vertices in an arbitrary subgraph of $G$, rather than separating $G$ in such a way the vertices in a specific $W\subseteq V(G)$ are balanced.  In the language of the previous paragraph, there is no reason that a balanced separation $(A,B)$ of $G[X]$ should have $|(A\setminus B)\cap W|<|W|$ and $|(B\setminus A)\cap W|<|W|$.

\citet{dvorak.norin:treewidth} prove \cref{main_result} with the constant $c=15$. Their proof is by contradiction and makes use of the relationship between treewidth and brambles established by \citet{seymour.thomas:graph}.  Essentially, they show that if $\tw(G)>15\sep(G)$, then there exists an $\alpha$-tame $W$-cloud (a special kind of network flow) which contradicts the choice of a haven (a special kind of flap assignment) that is derived from a bramble of order $15\sep(G)$.  The crux of their proof \cite[Proof of Lemma~7]{dvorak.norin:treewidth} involves showing that, for a \emph{carefully chosen} $W\subseteq V(G)$, a balanced separation of the subgraph $H\subseteq G$ induced by the saturated and hungry vertices of an $\alpha$-tame $W$-cloud is, by necessity, also (rougly) $W$-balanced. This leads to a contradiction related to the choice of $W$.\footnote{In an earlier draft of their result, \citet{dvorak.norin:treewidth_v1}, used tangles rather than brambles and havens, and confluent flows \cite{chen.kleinberg.ea:almost} rather than $W$-clouds to establish \cref{main_result} with the constant $c=105$.  They credit an anonymous referee for help in reducing the value of $c$.}

In the current paper, we prove \cref{main_result} with the constant $c=7915/139< 56.943$.  Despite the larger constant, we believe that the proof given here has a number of advantages.  The proof is constructive: It proves that $\tw(G)\le c\cdot \sep(G)$ by constructing a tree decomposition of $G$ having width less than $c\cdot\sep(G)$.  The proof requires fewer definitions and previous results: It does not use brambles, havens, or network flows.  Brambles and havens are avoided entirely.  The use of network flows is replaced by a collection of paths obtained from repeated applications of the simplest version of Menger's Theorem on vertex-disjoint paths in (unweighted undirected) graphs.

Most importantly, the proof given here is built around a generalization of $W$-balanced separations:  For a sufficiently large $t>0$ and an \emph{arbitrary} $W\subseteq V(G)$ of size at least $t\cdot \sep(G)$, there exists a subgraph $H$ of $G$ with $W\subseteq V(H)$ and a separation $(X,Y)$ of $G$ with $W\subseteq Y$ and $V(G)\setminus V(H)\subseteq X\setminus Y$ and having order less than $|W|$ and such that any balanced separation of $H$ must necessarily balance $W\cup (X\cap Y)$.  This leads to an algorithm for constructing a tree decomposition of $G$ similar in spirit to the algorithm outlined above.
In particular, recursively taking balanced separations of subgraphs of $H$ gives an algorithm for constructing a tree decomposition $\mathcal{T}_Y:=(B_x:x\in V(T_Y))$ of $G[Y]$ in which some bag $B_y$ contains $W\cup (X\cap Y)$.  Then, recursion/induction is used to find a tree decomposition $\mathcal{T}_X:=(B_x:x\in V(T_X))$ of $G':=G[X]$ in which some bag $B_x$ contains $W':=X\cap Y$.  Joining $T_X$ and $T_Y$ with the edge $xy$ gives a tree $T$ such that $\mathcal{T}:=(B_x:x\in V(T))$ is the desired tree decomposition of $G$.

\section{Preliminaries}

For standard graph theoretic terminology and notations, see \citet{diestel2017graph}.
Let $G$ be a graph and let $S,T\subseteq V(G)$. An \defin{$S$-$T$ path} in $G$ is a path in $G$ whose first vertex is in $S$ and whose last vertex is in $T$.  We say that a set $Z\subseteq V(G)$ \defin{separates} $S$ and $T$ if $G-X$ has no $S$-$T$ path.  When $Z$ separates $S$ and $T$, any separation $(X,Y)$ of $G$ with $S\subseteq X$, $T\subseteq Y$ and $X\cap Y=Z$ is called an \defin{$(S,Z,T)$-separation}. To see that an $(S,Z,T)$-separation always exists, let $G_X$ be the union of all components of $G-Z$ that contain a vertex of $S$. Then $S\subseteq V(G_X)\cup Z$. Take $G_Y$ to be the union of all components of $G-Z$ not included in $G_X$. Since  every component of $G_X$ contains a vertex in $S$, no component of $G_X$ contains a vertex in $T$.  Therefore, $T\subseteq V(G_Y)\cup Z$.  Then the separation $(X,Y):=(V(G_X)\cup Z, V(G_Y)\cup Z)$ is an $(S,Z,T)$-separation.

We make use of the following vertex connectivity version of Menger's Theorem (see, for example, \citet[Theorem~3.3.1]{diestel2017graph}): \

\begin{thm}[Menger's Theorem]\label{menger}
  Let $G$ be a graph and let $S$ and $T$ be subsets of $V(G)$. For each $k\in\N$, exactly one of the following is true:
  \begin{enumerate}[nosep,nolistsep,label=\rm(\roman*),ref=(\roman*)2]
      \item $G$ contains $k$ pairwise vertex-disjoint $S$-$T$ paths; or
      \item $G$ has a vertex subset $Z$ of size less than $k$ that separates $S$ and $T$.
  \end{enumerate}
\end{thm}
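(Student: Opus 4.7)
The plan is to prove the theorem by induction on $|E(G)|$. The fact that (i) and (ii) cannot both hold is immediate: every $S$-$T$ path must meet any set $Z$ that separates $S$ and $T$, so $k$ pairwise vertex-disjoint such paths force $|Z|\ge k$. It therefore suffices to show that at least one of the two alternatives always holds.

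For the base case $E(G)=\emptyset$, the only $S$-$T$ paths are single vertices in $S\cap T$; either $|S\cap T|\ge k$ and we have $k$ trivial paths, or $S\cap T$ itself is a separator of size less than $k$. For the inductive step, pick any edge $e=uv\in E(G)$ and apply the induction hypothesis to $G-e$. If $G-e$ contains $k$ vertex-disjoint $S$-$T$ paths, those paths work in $G$; otherwise $G-e$ has a separator $Z$ with $|Z|<k$. Every $S$-$T$ path in $G$ not already contained in $G-e$ must traverse the edge $e$, and hence contain both $u$ and $v$, so $Z\cup\{u\}$ and $Z\cup\{v\}$ both separate $S$ and $T$ in $G$. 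If $|Z|\le k-2$, or if $u\in Z$ or $v\in Z$, one of these sets has size less than $k$ in $G$ and we are done.

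The main obstacle is the single remaining sub-case: $|Z|=k-1$ with $u,v\notin Z$. I would resolve it by applying induction a second time, this time to the contraction $G/e$ with contracted vertex $w$, where $w$ is placed in $S'$ (resp.\ $T'$) whenever $S$ (resp.\ $T$) meets $\{u,v\}$. Since $|E(G/e)|<|E(G)|$, the induction hypothesis yields either $k$ vertex-disjoint $S'$-$T'$ paths in $G/e$---which lift to $k$ vertex-disjoint $S$-$T$ paths in $G$ by replacing each occurrence of $w$ by $u$, $v$, or a traversal of the edge $uv$, then shortening the resulting walks to paths---or a separator $Z'$ of $G/e$ of size less than $k$. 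If $w\notin Z'$, then $Z'\subseteq V(G)$ already separates $S$ and $T$ in $G$, so we are done.

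The delicate final piece is when $w\in Z'$: expanding $w$ back into $\{u,v\}$ produces only a separator of size $|Z'|+1\le k$ in $G$, which need not be strictly less than $k$. Here one must combine the information coming from the two failed attempts---$Z$ separates $G-e$ and contains neither $u$ nor $v$, while $Z'$ separates $G/e$ through $w$---to either exhibit an augmenting structure that produces $k$ vertex-disjoint $S$-$T$ paths outright or extract a genuinely smaller separator. This is the step at which the case analysis is tightest; Diestel's proof handles it via a careful bookkeeping of the induction hypothesis applied to both $G-e$ and $G/e$ simultaneously, and one could alternatively bypass the whole case split by reducing to the max-flow min-cut theorem on the vertex-split auxiliary digraph in which each vertex of $G$ is replaced by an in--out pair joined by a unit-capacity edge.
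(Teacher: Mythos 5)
The paper does not prove Menger's Theorem; it is stated as a standard tool and cited to Diestel, so there is no in-paper proof to compare against. Judged on its own terms, your proposal is not a complete proof: you correctly set up the uniqueness direction, the base case, the deletion step, and the easy sub-cases, but you explicitly punt on the only sub-case that is actually hard, namely $|Z|=k-1$ with $u,v\notin Z$ after deleting $e$, and $w\in Z'$ after contracting $e$. Saying ``one must combine the information from the two failed attempts'' or ``Diestel handles it via careful bookkeeping'' is a description of a gap, not a proof of the step, and merely expanding $w$ to $\{u,v\}$ gives a separator of size $k$, which is useless.

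The missing idea in the G\"oring/Diestel argument is that the second application of the induction hypothesis is \emph{not} made to $(G-e,S,T)$ but to $(G-e,S,X)$ and $(G-e,X,T)$, where $X:=(Z'\setminus\{w\})\cup\{u,v\}$ is the expanded separator of size $k$ in $G$. One shows that every $S$-$X$ separator of $G-e$ already separates $S$ from $T$ in $G$ (because $u,v\in X$, so the initial segment of any $S$-$T$ path up to its first vertex of $X$ avoids the edge $e$), and hence has at least $k$ vertices; by induction $G-e$ then contains $k$ disjoint $S$-$X$ paths ending at all of $X$, and symmetrically $k$ disjoint $X$-$T$ paths starting at all of $X$. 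These two fans glue along $X$ to give $k$ disjoint $S$-$T$ paths, using that any overlap off $X$ would yield an $S$-$T$ path in $G$ avoiding $X$, contradicting that $X$ is a separator. Without changing the pair of terminal sets in the inductive call, there is no way to close the remaining case, so your current write-up falls short of a proof. The max-flow/min-cut reduction you mention at the end is a legitimate alternative, but it replaces rather than repairs the inductive argument you set out to give.
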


The \defin{depth}, $\mathdefin{\depth_T(x)}$ of a node $x$ in a rooted tree $T$ is the number of edges on the path from $x$ to the root of $T$. The \defin{height} of a rooted tree $T$ is $\mathdefin{\height(T)}:=\max\{\depth_T(x):x\in V(T)\}$. For a node $x$ in a rooted tree $T$, we let $\mathdefin{T_x}$ denote the subtree of $T$ induced by all the descendants of $x$, including $x$ itself.

A tree decomposition $\mathcal{T}:=(B_x:x\in V(T))$ of a graph $G$ is \defin{rooted} if the tree $T$ is rooted. If $x_0$ is the root of $T$, then $B_{x_0}$ is called the \defin{root bag} of $\mathcal{T}$.  Let $\mathcal{T}:=(B_x:x\in V(T))$ be a rooted tree decomposition of a graph $G$ where $x_0$ is the root of $T$.   The \defin{boundary} of $x_0$ is $\mathdefin{\boundary_{\mathcal{T}}(x_0)}:=\emptyset$.  For a node $x$ of $T$ with parent $y$, the \defin{boundary} of $x$ is $\mathdefin{\boundary_\mathcal{T}(x)}:=B_x\cap B_y$.  The \defin{interior} of a node $x$ in $T$ is $\mathdefin{\interior_\mathcal{T}(x)}:=(\bigcup_{x'\in V(T_x)} B_{x'})\setminus \boundary(x)$.  Note that, for the root $x_0$ of $T$, $\interior_{\mathcal{T}}(x_0)=V(G)$.  From these definitions it follows that, if $T_y\supseteq T_x$, then $\interior_{\mathcal{T}}(y)\supseteq\interior_{\mathcal{T}}(x)$ and that $\interior_\mathcal{T}(y) \cup \boundary_\mathcal{T}(y)\supseteq \interior_\mathcal{T}(x) \cup \boundary_\mathcal{T}(x)$. In particular, these inclusion relations hold for every ancestor $y$ of $x$.

The following lemma allows us to restrict a rooted tree decomposition of a graph $G$ to the subgraph $G[Y]$ induced by one part of a separation $(X,Y)$ in such a way that $X\cap Y$ is contained in a single bag (the root bag) of the resulting decomposition.

\begin{lem}\label{restricted_decomp}
    Let $\mathcal{T}':=(B'_x:x\in V(T'))$ be a rooted tree decomposition of a graph $G$, let $(X,Y)$ be a separation of $G$, and let $B_x:=(B'_x\cap Y)\cup (\interior_{\mathcal{T}'}(x) \cap X \cap Y)$ for each $x\in V(T')$.  Then $\mathcal{T}:=(B_x:x\in V(T'))$ is a tree decomposition of $G[Y]$.  Furthermore, the root bag $B_{x_0}$ of $\mathcal{T}$ contains $X\cap Y$.
\end{lem}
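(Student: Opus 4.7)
The plan is to verify the two axioms of a tree decomposition for $\mathcal{T}$ together with the root-bag claim. The root-bag statement is immediate, since $\interior_{\mathcal{T}'}(x_0)=V(G)$ for the root $x_0$ of $T'$ gives $B_{x_0}\supseteq \interior_{\mathcal{T}'}(x_0)\cap X\cap Y = X\cap Y$. Edge coverage is also routine: for any $vw\in E(G[Y])$, choosing $x\in V(T')$ with $\{v,w\}\subseteq B'_x$ (guaranteed by $\mathcal{T}'$) and using $v,w\in Y$ yields $\{v,w\}\subseteq B'_x\cap Y\subseteq B_x$.

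The substance lies in the connectivity axiom: for every $v\in Y$, the set $S_v:=\{x\in V(T'):v\in B_x\}$ should induce a nonempty connected subtree of $T'$. Let $S'_v:=\{x:v\in B'_x\}$, which is nonempty and connected since $\mathcal{T}'$ is a tree decomposition of $G$. I would split into two cases according to whether $v\in X$. If $v\in Y\setminus X$, the second summand $\interior_{\mathcal{T}'}(x)\cap X\cap Y$ in the definition of $B_x$ never contains $v$, so $S_v=S'_v$ and we are done. If instead $v\in X\cap Y$, both summands simplify to membership conditions on $v$ alone, giving $S_v=S'_v\cup I_v$, where $I_v:=\{x:v\in \interior_{\mathcal{T}'}(x)\}$.

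The main obstacle is showing $S'_v\cup I_v$ is a connected subtree in this second case. I would lean on two ingredients. First, the monotonicity of $\interior_{\mathcal{T}'}$ along ancestor chains noted in the preliminaries makes $I_v$ upward-closed in $T'$, so $I_v$ is a connected subtree containing the root $x_0$. Second, the topmost node $r_v$ of $S'_v$ (the unique node of $S'_v$ minimizing depth, which exists because $S'_v$ is a connected subtree of a rooted tree) lies in $I_v$: if $r_v=x_0$ this is trivial; otherwise, letting $p$ be the parent of $r_v$, the topmost property forces $v\notin B'_p$, hence $v\notin\boundary_{\mathcal{T}'}(r_v)=B'_{r_v}\cap B'_p$, while $v\in B'_{r_v}\subseteq \bigcup_{x'\in V(T'_{r_v})}B'_{x'}$, so $v\in \interior_{\mathcal{T}'}(r_v)$. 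Thus $r_v\in S'_v\cap I_v$, and $S_v=S'_v\cup I_v$ is connected as the union of two connected subtrees meeting at $r_v$.
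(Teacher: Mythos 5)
Your proof is correct, and it takes a genuinely different route from the paper's. The paper argues by contradiction: it posits a minimal-length path $x_0,\ldots,x_r$ in $T'$ with $v\in B_{x_0}$, $v\in B_{x_r}$, $v\notin B_{x_i}$ for the interior $i$'s, and then analyzes the structure of that path via ancestor relationships to derive a contradiction. You instead give a direct structural characterization: for $v\in Y\setminus X$ the new trace $S_v$ coincides with the old trace $S'_v$, and for $v\in X\cap Y$ you show $S_v=S'_v\cup I_v$ where $I_v=\{x:v\in\interior_{\mathcal{T}'}(x)\}$, observe $I_v$ is an upward-closed (hence connected, root-containing) subtree by the monotonicity of $\interior_{\mathcal{T}'}$ along ancestor chains, and then argue that the topmost node $r_v$ of $S'_v$ belongs to $I_v$, so the two connected subtrees meet and their union is connected. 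This is arguably cleaner: it isolates exactly what the interior term adds to each trace (a downward cone from the root ending at $r_v$) rather than just showing no bad configuration exists, and it avoids the case analysis on whether $v\in B'_{x_0}$ or $v\in B'_{x_r}$. The one thing you lean on implicitly is the monotonicity fact $T'_y\supseteq T'_x\implies\interior_{\mathcal{T}'}(y)\supseteq\interior_{\mathcal{T}'}(x)$, but the paper states this explicitly in its preliminaries, so that is fair to use.
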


\begin{proof}
  The ``furthermore'' clause of the statement is immediate, since $\interior_{\mathcal{T}}(x_0)=V(G)$, so $B_{x_0}\subseteq V(G)\cap X\cap Y=X\cap Y$.
  To show that $\mathcal{T}$ is a tree decomposition of $G$ we must show that $\mathcal{T}$ satisfies Properties~\ref{covers_edges} and \ref{connectivity} of tree decompositions.
  Let $vw$ be an edge of $G[Y]$.  Since $vw$ is also an edge of $G$ and  $\mathcal{T'}$ is a tree decomposition of $G$,  $\{v,w\}\subseteq B'_x$ for some $x\in V(T')$, so $\{v,w\}\subseteq B'_x\cap Y\subseteq  B_x$. Thus, $\mathcal{T}$ has Property~\ref{covers_edges} of tree decompositions.

  Suppose, for the sake of contradiction, that $\mathcal{T}$ violates Property~\ref{connectivity}.  Then there exists some $v\in Y$, some $r\ge 2$, and some path $P=x_0,\ldots,x_r$ in $T'$ with $v\in B_{x_0}$, $v\in B_{x_r}$ and $v\not\in B_{x_i}$ for some $i\in\{1,\ldots,r-1\}$.  Let $P$ be chosen so that its length, $r$, is minimum.  Then $v\not\in B_{x_i}$ for each $i\in\{1,\ldots,r-1\}$.  Since $\mathcal{T}'$ is a tree decomposition of $G$, $v\not\in B'_{x_0}$ or $v\not\in B'_{x_r}$ since, otherwise $v\in B_{x_i}\subseteq B'_{x_i}\cap Y$ for each $i\in\{0,\ldots,r\}$.  Assume, without loss of generality that $v\not\in B'_{x_r}$. Then $v\in\interior_{\mathcal{T}'}(x_r)\cap X\cap Y$ since $v\in B_{x_r}$.  In particular, $v\in X\cap Y$.

  Define $i^*$ so that $x_{i^*}$ is the unique vertex in $P$ that is an ancestor of both $x_0$ and $x_r$.  Then  $\interior_{\mathcal{T}'}(x_{i^*})\supseteq \interior_{\mathcal{T}'}(x_i)$ for each $i\in\{0,\ldots,r\}$.   In particular $v\in \interior_{\mathcal{T}'}(x_r)\cap X\cap Y\subseteq\interior_{\mathcal{T}'}(x_{i^*})\cap X\cap Y\subseteq B_{x_{i}}$ for each $i\in\{i^*,\ldots,r\}$.  Therefore $i^*=r$ and $x_r$ is an ancestor of $x_0$.  Therefore $x_1$ is the parent of $x_0$, $v\in B_{x_0}$ and $v\not\in B_{x_1}$.  If $v\in\interior_{\mathcal{T}'}(x_0)$ then $v\in\interior_{\mathcal{T}'}(x_0)\cap X\cap Y\subseteq\interior_{\mathcal{T}'}(x_1)\cap X\cap Y\subseteq B_{x_1}$, a contradiction.  Therefore $v\in B'_{x_0}$ since $v\in B_{x_0}$.  Thus $x_1$ is the parent of $x_0$, $v\in B'_{x_0}$ and $v\not\in B'_{x_1}$.  Therefore $v\in\interior_{\mathcal{T}'}(x_1)$, so $v\in\interior_{\mathcal{T}'}(x_1)\cap X\cap Y\subseteq B_{x_1}$, also a contradiction.
\end{proof}

The following construction of a tree decomposition using balanced separations (or variants of this construction using balanced separators) is fairly standard.

\begin{lem}\label{separation_tree}
  Let $G$ be an $n$-vertex graph with $\sep(G)\le a$.  Then, for every integer $h\ge 0$, $G$ has a rooted tree decomposition $\mathcal{T}:=(B_x:x\in V(T))$ such that
  \begin{enumerate}[nosep,nolistsep,label=(\roman*)]
    \item\label{height_bound} $\height(T)\le h$;
    \item\label{size_bounds} for each $x\in V(T)$,  $|\interior_\mathcal{T}(x)|\le n\cdot(\tfrac{2}{3})^{\depth_T(x)}$ and $|\boundary(x)|\le \depth_T(x)\cdot a$;
    \item\label{leaf_size_bounds} for each leaf $y$ of $T$, $|\interior_{\mathcal{T}}(y)|\le n\cdot(\tfrac{2}{3})^h$.
  \end{enumerate}
\end{lem}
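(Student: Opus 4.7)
The plan is to prove the lemma by induction on $h$. The base case $h=0$ is immediate: take $T$ to be the single node $x_0$ with $B_{x_0}:=V(G)$, giving $\interior_\mathcal{T}(x_0)=V(G)$ of size $n$ and $\boundary_\mathcal{T}(x_0)=\emptyset$, so all three conditions hold trivially.

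For the inductive step with $h\ge 1$, I would distinguish two cases depending on whether $G$ is large enough to admit a useful separation. If $n\le a$, no balanced separation can usefully shrink $G$, so I would pad the tree: let $T$ be a single path $x_0,x_1,\ldots,x_h$ rooted at $x_0$ with every bag $B_{x_i}:=V(G)$. Then $\interior_\mathcal{T}(x_0)=V(G)$, while for each $i\ge 1$ we have $\boundary_\mathcal{T}(x_i)=V(G)$ and $\interior_\mathcal{T}(x_i)=\emptyset$, so $|\boundary_\mathcal{T}(x_i)|=n\le a\le i\cdot a$ and the interior and leaf bounds hold for free.

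If instead $n>a$, the plan is a standard divide-and-conquer. Pick a balanced separation $(A,B)$ of $G$ of order at most $a$ (which exists because $\sep(G)\le a$) and let $S:=A\cap B$. Since $(A,B)$ is a separation, no edge of $G$ has one endpoint in $A\setminus B$ and the other in $B\setminus A$, so every component $C_i$ of $G-S$ lies entirely in one of these two sides and therefore satisfies $|C_i|\le 2n/3$. Because $G[C_i]$ is a subgraph of $G$, $\sep(G[C_i])\le a$, and applying the inductive hypothesis to $G[C_i]$ with parameter $h-1$ yields a rooted tree decomposition $\mathcal{T}_i=(B^i_x:x\in V(T_i))$ of $G[C_i]$ of height at most $h-1$ satisfying the three conditions. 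To assemble $\mathcal{T}$, take a new root $x_0$ with $B_{x_0}:=S$, make the root of each $T_i$ a child of $x_0$, and, crucially, enlarge every other bag by $S$: $B_x:=B^i_x\cup S$ for every $x\in V(T_i)$.

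The main obstacle is verifying that the interior and boundary bounds survive this assembly; checking that $\mathcal{T}$ is a tree decomposition itself is routine (edges inside $S$ sit in $B_{x_0}$, edges inside a $C_i$ are covered in $\mathcal{T}_i$, edges between $S$ and $C_i$ are covered because $S$ was added to every bag of $T_i$, there are no edges across different components, and the connectedness of bag-subtrees is immediate). The real observation is that enlarging every bag of $\mathcal{T}_i$ by $S$ enlarges every boundary by exactly $S$ but leaves every interior \emph{unchanged}: because $\bigcup_{x'\in V(T_x)}B^i_{x'}\subseteq C_i$ is disjoint from $S$, the newly added copies of $S$ fall inside the boundary, giving $\interior_\mathcal{T}(x)=\interior_{\mathcal{T}_i}(x)$ and $\boundary_\mathcal{T}(x)=\boundary_{\mathcal{T}_i}(x)\cup S$. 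The boundary bound then grows by at most $a$ per level and the interior bound $|\interior_\mathcal{T}(x)|\le|C_i|(2/3)^{\depth_{T_i}(x)}\le n(2/3)^{\depth_T(x)}$ falls out from $|C_i|\le 2n/3$ and $\depth_T(x)=\depth_{T_i}(x)+1$, with the leaf bound following identically. The pitfall I would flag is that recursing on $G[C_i\cup S]$ rather than on $G[C_i]$ would give an intermediate graph of size up to $2n/3+a$, breaking the factor-$2/3$ shrinkage; recursing on $G[C_i]$ and adding $S$ only afterwards is what makes the $(2/3)^{\depth}$ bound exact.
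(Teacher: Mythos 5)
Your proof is correct and establishes the same bounds, but it takes a genuinely different route from the paper's. The paper runs a single recursive procedure on pairs $(G',\partial')$ with a \emph{global stopping threshold} $N:=n\cdot(\tfrac{2}{3})^h$: at each call it takes a balanced separation $(A',B')$ of $G'-\partial'$, sets the bag to $\partial'\cup(A'\cap B')$, and recurses on $(G'[A'],A'\cap(\partial'\cup B'))$ and $(G'[B'],B'\cap(\partial'\cup A'))$, producing a binary tree and carrying the boundary forward as the $\partial'$ argument; it then verifies $\boundary_{\mathcal{T}}(x)=\partial_x$ by a separate induction. You instead do induction on $h$, use a balanced \emph{separator} $S=A\cap B$ and recurse on the components of $G-S$ (arbitrary fan-out rather than binary), and rather than carrying the boundary through the recursive call you discard $S$, invoke the inductive hypothesis on the clean graphs $G[C_i]$, and graft $S$ into every bag afterwards — which is what keeps the interiors unchanged and the boundaries growing by exactly $|S|\le a$ per level. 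Your padding path for the $n\le a$ case plays the role of the paper's threshold: both guarantee $\height(T)\le h$ even when separations stop making progress. Both arguments are clean; the paper's formulation avoids the padding case at the cost of proving the auxiliary identity $\boundary_{\mathcal{T}}(x)=\partial_x$, while your inductive formulation avoids that bookkeeping at the cost of the extra small-$n$ case. Your observation about recursing on $G[C_i]$ rather than $G[C_i\cup S]$ to preserve exact $(\tfrac{2}{3})^{\depth}$ shrinkage is exactly the point the paper exploits by separating $G'-\partial'$ rather than $G'$.
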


\begin{proof}
  The tree decomposition $\mathcal{T}=(B_x:x\in V(T))$ and its supporting tree $T$ is constructed recursively, as follows:  Fix a global value $N:=n\cdot(\tfrac{2}{3})^h$ that does not change during recursive invocations. Each recursive invocation takes a pair $(G',\partial')$ and the initial invocation is on the pair $(G,\emptyset)$. When recursing on $(G',\partial')$ to construct a subtree $T'$ we apply the following rule:  If $|V(G')\setminus\partial'|\le N$, then $T'$ consists of a single node $x$ with $B_{x}:=V(G')$.  Otherwise, let $(A',B')$ be a balanced separation of $G'-\partial'$ of order at most $a$.  The root $x$ of $T'$ has $B_{x}:=\partial'\cup (A'\cap B')$. The left child of $x$ is the root of the tree obtained by recursing on $(G'[A'],A'\cap(\partial'\cup B'))$ and the right child of $x$ is the root of the tree obtained by recursing on $(G[B'],B'\cap(\partial'\cup A'))$.

  We now show that $\boundary_{\mathcal{T}}(x)=\partial_x$, for each subtree $T_x$ rooted at $x\in V(T)$ that was constructed by a recursive invocation on $(G_x,\partial_x)$. If $\depth_T(x)=0$ then $x$ is the root of $T$ and $\boundary_{\mathcal{T}}(x)=\emptyset=\partial_x$, by definition.
  Now suppose $\depth_T(x)\ge 1$, the parent of $x$ is $y$ and $T_y$ is the result of a recursive invocation on $(G_y,\partial_y)$.
  Without loss of generality, $G_x=G[A^y]$ where $(A^y,B^y)$ is a separation of $G_y-\partial_y$.  Then $\partial_x=(A^y\cap(\partial_y\cup B^y)=(A^y\cap (\partial_y\cup (A^y\cap B^y))=A^y\cap B_y$.  If $x$ is a leaf of $T$ then $B_x=V(G_x)=A^y$, so $B_x\cap B_y=A^y\cap B_y=\partial_x$.  If $x$ is not a leaf of $T$ then $B_x=\partial_x\cup (A^x\cap B^x)$ where $(A^x,B^x)$ is a separation of $G_x-\partial_x=G[A^y\setminus (A^y\cap B_y)]=G[A^y\setminus B_y]$. In particular, $A^x\cup B^x$ contains no vertex of $B_y$, so $B_x\cap B_y=(\partial_x\cup (A^x\cap B^x))\cap B_y= \partial_x\cap B_y=\partial_x$ since $\partial_x=A^y\cap B_y\subseteq B_y$.  In either case $\boundary_{\mathcal{T}}(x)=B_x\cap B_y=\partial_x$.

  Now the bounds on $|\interior_{\mathcal{T}}(x)|$ and $|\boundary_{\mathcal{T}}(x)|$ are easily established by induction on $d:=\depth_T(x)$: When $d=0$, $|\boundary_{\mathcal{T}}(x)|=|\emptyset|=0 = 0a$ and $|\interior_{\mathcal{T}}(x)|=|V(G)\setminus\boundary_{\mathcal{T}}(x)|=n=n\cdot (\tfrac{2}{3})^0$. Now consider a node $x$ of depth $d\ge 1$ with parent $y$ that was created by a recursive invocation on $(G_y,\partial_y)=(G_y,\boundary_\mathcal{T}(y))$. Without loss of generality $T_x$ was created by a recursive invocation on $(G[A^y],\partial_x)$ where $(A^y,B^y)$ is a balanced separation of $G_y-\boundary_y=\interior_{\mathcal{T}}(y)$. Then  $|\interior_{\mathcal{T}}(x)|\le \tfrac{2}{3}\cdot |\interior_{\mathcal{T}}(y)|\le \tfrac{2}{3}\cdot (\frac{2}{3})^{d-1}\cdot n=(\tfrac{2}{3})^d\cdot n$ and $|\boundary_{\mathcal{T}}(x)|=|\partial_x|\le |\partial_y|+|A^y\cap B^y|= |\boundary_{\mathcal{T}}(y)|+|A^y\cap B^y|\le (d-1)a + a=da$.

  The bound on $\height(T)$ follows from the bound $|\interior_{\mathcal{T}}(x)|\le n\cdot (\tfrac{2}{3})^{\depth_T(x)}$ and the fact that the algorithm returns a 1-node tree (of height $0$) when $|V(G')\setminus\partial|\le N=n\cdot(\tfrac{2}{3})^h$.
\end{proof}

\section{The Proof}

The following definition replaces the notion of $W$-clouds in \cite{dvorak.norin:treewidth}.
Let $W_{-1}:=\emptyset$, let $G$ be a graph, let $W\subseteq V(G)$, let $W_0\subseteq W_1\subseteq\cdots\subseteq W_{\ell}\subseteq W_{\ell+1}\subseteq V(G)$ be a nested sequence of vertex subsets of $G$, and let $\Delta_i:=W_{i}\setminus W_{i-1}$ and $s_i:=|\Delta_i|$, for each $i\in\{0,\ldots,\ell+1\}$.  Then $W_0,\ldots,W_{\ell+1}$ is a \defin{$W$-sequence of width $w$} in $G$ if it satisfies the following conditions:
\begin{enumerate}[nosep,nolistsep,label=\rm(\alph*),ref=(\alph*)]
  \item $W_0=W$.\label{w_starts}
  \item $s_i=w$, for each $i\in\{0,\ldots,\ell\}$.\label{uniform_size}
  \item $s_{\ell+1}\in\{0,\ldots,w-1\}$.\label{remainder}
  \item $G[W_i]$ contains $s_i$  pairwise vertex-disjoint $\Delta_i$-$W$ paths, for each $i\in\{0,\ldots,\ell+1\}$.\label{linked}
  \item There exists $Z\subseteq W_{\ell+1}$ with $|Z|=s_{\ell+1}$ that separates $V(G)\setminus W_{\ell}$ and $W$. \label{separated}
\end{enumerate}

\begin{lem}\label{w_sequence}
  For every graph $G$, every $W\subseteq V(G)$ and every non-negative integer $w\le |W|$, there exists a $W$-sequence of width $w$ in $G$.
\end{lem}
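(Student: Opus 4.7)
My plan is to build the $W$-sequence greedily: extend by $w$ new vertices at each step using \cref{menger}, and stop as soon as the next extension is blocked by a separator of size less than $w$. I initialize $W_0 := W$, satisfying (a), and note that (d) at index $0$ holds trivially since each vertex of $W$ is a length-zero $\Delta_0$-$W$ path in $G[W]$.

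For the extension step, given $W_0,\ldots,W_i$ satisfying (d), I apply \cref{menger} in $G$ with $S := W$, $T := V(G)\setminus W_i$, and $k := w$. If the first outcome holds, I take $w$ vertex-disjoint $W$-$T$ paths $P_1,\ldots,P_w$ and truncate each $P_k$ at its first vertex $v_k$ in $T$, so that $v_k$ is the only vertex of $P_k$ outside $W_i$. The $v_k$ are distinct by vertex-disjointness, so putting $\Delta_{i+1} := \{v_1,\ldots,v_w\}$ and $W_{i+1} := W_i \cup \Delta_{i+1}$ gives $s_{i+1} = w$, while each truncated $P_k$ lies in $G[W_{i+1}]$ and, read in reverse, is a $\Delta_{i+1}$-$W$ path, establishing (d) at $i+1$. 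I then iterate.

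For the terminating step, suppose \cref{menger} does not yield $w$ paths. Let $s$ be the maximum number of vertex-disjoint $W$-$(V(G)\setminus W_i)$ paths in $G$, so $s < w$; \cref{menger} applied at $k := s+1$ produces a separator $Z$ of size at most $s$, and since $s$ disjoint paths exist and each must meet $Z$, we get $|Z| = s$. I set $\ell := i$, fix $s$ such disjoint paths, truncate them as before to obtain $\Delta_{\ell+1}$ of size $s$, and let $W_{\ell+1} := W_\ell \cup \Delta_{\ell+1}$; this immediately gives (c) and (d) at index $\ell+1$. For (e), the $s$ truncated paths lie in $G[W_{\ell+1}]$ and each meets $Z$ in exactly one vertex (again by vertex-disjointness and $|Z| = s$), so $Z \subseteq W_{\ell+1}$, and $Z$ separates $W$ from $V(G)\setminus W_\ell$ by construction.

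Termination is immediate because each extension step enlarges $W_i$ by $w \ge 1$ vertices while $V(G)$ is finite; if $V(G)\setminus W_i$ ever becomes empty, the terminating case triggers with $s = 0$ and $Z := \emptyset$. The only step requiring genuine care is the path truncation, whose dual role is to keep the paths inside $G[W_{i+1}]$ in the extension step and to force the Menger separator into $W_{\ell+1}$ in the terminating step; both consequences reduce to the same brief disjointness-based counting argument, so I do not expect any serious obstacle.
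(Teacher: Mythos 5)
Your proof is correct and follows essentially the same approach as the paper: extend $W_i$ by the first escape points of a maximum family of vertex-disjoint paths between $W$ and $V(G)\setminus W_i$, invoking Menger's Theorem when fewer than $w$ such paths exist to produce a separator $Z$ that, by the disjointness/counting argument, lies on the truncated paths and hence inside $W_{\ell+1}$. The only difference is cosmetic—you orient the paths from $W$ outward and truncate at the first vertex leaving $W_i$, whereas the paper orients them from $V(G)\setminus W_i$ inward and truncates at the last exit from $W_i$ and first arrival in $W$—but both choices yield paths inside $G[W_{i+1}]$ certifying condition (d).
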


\begin{proof}
  Let $W_0:=W$ and suppose that sets $W_0,\ldots,W_{i}$ have been defined that satisfy \ref{w_starts}, \ref{uniform_size} and \ref{linked} for some $i\ge 0$.  (These conditions are trivially satisfied for $i=0$.)  We now show how to construct $W_{i+1}$.

  Let $w'$ be the maximum number of pairwise vertex-disjoint $(V(G)\setminus W_i)$-$W$ paths in $G$, let $r:=\min\{w,w'\}$, and let $P_1,\ldots,P_{r}$ be pairwise vertex-disjoint $(V(G)\setminus W_i)$-$W$ paths in $G$.  For each $j\in\{1,\ldots,r\}$ let $v_j$ be the last vertex of $P_j$ contained in $V(G)\setminus W_i$ and let $P_j'$ be the subpath of $P_j$ that begins at $v_j$ and ends at the first vertex of $P_j$ contained in $W$.  Set $W_{i+1}:=\{v_1,\ldots,v_r\}\cup W_i$, which implies that $s_{i+1}=r$.  The paths $P_1',\ldots,P_{s_{i+1}}'$ certify that $W_0,\ldots,W_{i+1}$ satisfies \ref{linked}.

  If $r=w$ then $W_{i+1}$ also satisfies \ref{uniform_size} so that we now a sequence $W_0,\ldots,W_{i+1}$ that satisfies \ref{w_starts}, \ref{uniform_size} and \ref{linked}.  In this case we can continue to define $W_{i+2}$ as above.

  If $r< w$ then we set $\ell:=i$. Since $r< w$, $s_{\ell+1}=r<w$, so this choice of $\ell$ satisfies \ref{remainder}.  Since $G$ does not contain $k:=r+1$ pairwise vertex-disjoint $(V(G)\setminus W_{i})$-$W$ paths, \cref{menger} implies that there exists $Z\subseteq V(G)$ with $|Z|\le r$ such that $G-Z$ has no $(V(G)\setminus W_{\ell})$-$W$ path.  Since the paths $P_1',\ldots,P_{r}'$ are pairwise vertex-disjoint  $(V(G)\setminus W_{\ell})$-$W$ paths, $Z$ must contain at least one vertex from each of these paths, so $|Z|\ge r$. Therefore, $r\le |Z|\le r$, so $|Z|=r=s_{\ell+1}$.  Since $V(P_j')\subseteq W_{\ell+1}$ for each $j\in\{1,\ldots,s_{\ell+1}\}$, $Z\subseteq W_{\ell+1}$.  Therefore, this choice of $Z$ satisfies \ref{separated}.  Therefore, the sequence $W_0,\ldots,W_{\ell+1}$ satisfies \ref{w_starts} (a condition on $W_0$), \ref{uniform_size} (conditions on $W_1,\ldots,W_{\ell}$), \ref{remainder} (a condition $W_{\ell+1}$), \ref{linked} (conditions on $W_0,\ldots,W_{\ell+1}$), and \ref{separated} (a condition on $W_{\ell+1}$).  Thus, $W_0,\ldots,W_{\ell+1}$ is a $W$-sequence of width $w$ in $G$.
\end{proof}

Observe that, for any $W$-sequence $W_0,\ldots,W_{\ell+1}$, we have the bounds $(\ell+1)|W|\le |W_{\ell+1}|< (\ell+2)|W|$, so $1/(\ell+2)< |W|/|W_{\ell+1}|\le 1/(\ell+1)$.
The following lemma shows that, for any separation $(A,B)$ of $G[W_{\ell+1}]$, the size of the intersection of $A\setminus B$ with $W\cup Z$ is bounded by the order $|A\cap B|$ of $(A,B)$ and the ratio $|A\setminus B|\cdot |W|/|W_{\ell+1}|$.

\begin{lem}\label{z_w_bound}
  Let $G$ be a graph, let $W\subseteq V(G)$, let $W_0,\ldots,W_{\ell+1}$ be a $W$-sequence of width $|W|$ in $G$ with $\ell\ge 1$, let $\Delta_{\ell+1}:=W_{\ell+1}\setminus W_{\ell}$,  and let $Z\subseteq W_{\ell+1}$, $|Z|=|\Delta_{\ell+1}|$, separate $V(G)\setminus W_{\ell}$ and $W$ in $G$.
  Then every $(V(G)\setminus W_{\ell},Z,W)$-separation $(X,Y)$ of $G$ has the property that, for any separation
  $(A,B)$ of $G[W_{\ell+1}]$,
  \[
    |W\setminus B|+|Z\setminus B|\le \frac{(2+\tfrac{1}{6})\,|A\setminus B|}{\ell+2}+3\,|A\cap B| \enspace .
  \]
\end{lem}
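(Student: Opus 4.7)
Write $L:=|A\setminus B|$, $M:=|A\cap B|$, $S:=W\setminus B$, $T:=Z\setminus B$. The plan is to derive a short system of linear inequalities in $|S|$, $|T|$, and $|S\cap W^*|$ --- where $W^*\subseteq W$ is the set of $W$-endpoints of the $s_{\ell+1}$ vertex-disjoint $\Delta_{\ell+1}$-$W$ paths from condition~(d) --- and then to combine them. For each $i\in\{1,\ldots,\ell\}$, condition~(d) together with $s_i=|W|$ from~(b) gives a bijection $u\mapsto v_i(u)$ from $W$ to $\Delta_i$; write $P_{i,u}$ for the corresponding $\Delta_i$-$W$ path in $G[W_i]$. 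At the top layer, condition~(e) plus a Menger-style argument of the sort used in the proof of \cref{w_sequence} shows that $Z$ hits each of the $s_{\ell+1}$ top-layer paths exactly once, producing pairwise vertex-disjoint subpaths $Q_j$ from $z_j\in Z$ to $\pi(z_j)\in W^*$ in $G[W_{\ell+1}]$, where $\pi\colon Z\to W^*$ is the induced injection.

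For each $u\in S$ and each $i\in\{1,\ldots,\ell\}$ I would case-split on the location of $v_i(u)$ in $(A,B)$: if $v_i(u)\in A\setminus B$ it contributes a distinct vertex of $\Delta_i\cap(A\setminus B)$, and otherwise the path $P_{i,u}$ uses a distinct $A\cap B$ vertex (either $v_i(u)$ itself if $v_i(u)\in A\cap B$, or an interior crossing vertex if $v_i(u)\in B\setminus A$). Disjointness of the paths at level $i$ caps the latter two cases at $M$ values of $u$, giving $|S|\le\alpha_i+M$ with $\alpha_i:=|\{u\in S:v_i(u)\in A\setminus B\}|$. Summing over $i$ and using that $\Delta_0,\Delta_1,\ldots,\Delta_{\ell+1}$ partition $W_{\ell+1}$ with $|\Delta_0\cap(A\setminus B)|=|S|$ produces $(\ell+1)|S|\le L-L_{\ell+1}+\ell M$ where $L_{\ell+1}:=|\Delta_{\ell+1}\cap(A\setminus B)|$; running the same analysis at level $\ell+1$ on $S\cap W^*$ yields $|S\cap W^*|\le L_{\ell+1}+M$, which substituted back refines the previous inequality to $(\ell+1)|S|+|S\cap W^*|\le L+(\ell+1)M$.

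For the $T$ term, an analogous case analysis on $\pi(z)$ for $z\in T$, combined with the disjointness of the $Q_j$ paths, gives $|T|\le|S\cap W^*|+M$; and counting the $A\setminus B$ endpoints of the $Q_j$ paths yields $|T|+|S\cap W^*|\le L$. The desired bound would then follow by taking an appropriate positive linear combination of the three inequalities
\[
  (\ell+1)|S|+|S\cap W^*|\le L+(\ell+1)M,\qquad |T|+|S\cap W^*|\le L,\qquad |T|\le|S\cap W^*|+M,
\]
and maximizing $|S|+|T|$.

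The hardest part will be pinning down the precise constants $\tfrac{13}{6}$ and $3$: a direct LP combination of the three constraints above yields only $|S|+|T|\le\tfrac{\ell+2}{2(\ell+1)}L+\tfrac{3\ell+4}{2(\ell+1)}M$, whose $L$-coefficient slightly exceeds the target $\tfrac{13}{6(\ell+2)}$ for small $\ell$ and grows much faster in $\ell$. I therefore expect the proof to also invoke the $(X,Y)$-separation --- in particular the fact that the initial segment of each top-layer path before its $Z$-vertex lies in $X\setminus Z$ while its continuation $Q_j$ lies in $Y\setminus Z\subseteq W_\ell\setminus W$ --- to produce an extra constraint that couples the $A\cap B$ witnesses used in the $Q_j$ analysis to those used in the $P_{i,u}$ analyses, tightening the final coefficients to the claimed form.
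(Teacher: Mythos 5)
Your overall strategy --- count vertex-disjoint $\Delta_i$-$W$ paths level by level and extract linear inequalities by classifying each path by where its endpoints fall in $(A,B)$ --- is the same as the paper's, and your inequalities (a) through (c) are correct and roughly parallel to the paper's inequalities~\eqref{w_b_bound} and the bound $|\mathcal{P}_0^\star|+|\mathcal{P}_1^\star|\le|W\setminus B|$. But you have correctly diagnosed that your three final constraints are not enough: your LP gives $L$-coefficient $\tfrac{\ell+2}{2(\ell+1)}$, which tends to $\tfrac12$ rather than decaying like $1/(\ell+2)$, so no linear combination of your constraints can produce the claimed bound. The remaining question is where the extra leverage comes from, and here your guess is wrong: \emph{the proof of \cref{z_w_bound} never uses the $(X,Y)$-separation at all}. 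The separation $(X,Y)$ appears in the lemma statement only because it is convenient for the induction in \cref{main_result}; the inequality is really a statement about $(A,B)$, $W$, $Z$, and the $W$-sequence alone. So the tightening you are looking for is not there.

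What the paper actually does is keep the two ``crossing'' quantities separate instead of coarsening each to $M$. Writing $p_0,p_1,p_2$ for the numbers of $\Delta_{\ell+1}$-$W$ paths through $Z\setminus B$ that (0) start in $\Delta_{\ell+1}\setminus B$ and end in $W\setminus B$, (1) start in $\Delta_{\ell+1}\cap B$ and end in $W\setminus B$, (2) start in $\Delta_{\ell+1}\setminus B$ and end in $W\cap B$, it uses \emph{three} relations among them simultaneously: $p_0+p_1\le|W\setminus B|$, $p_0+p_2\le|\Delta_{\ell+1}\setminus B|$ (this is the analogue of your $L_{\ell+1}$-inequality, but stated for the $T$-paths rather than all of $W^*$), and crucially the \emph{joint} bound $p_1+p_2\le|A\cap B|$. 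The joint bound is strictly stronger than bounding each of $p_1,p_2$ by $M$ separately --- which is, in effect, what your constraints (b) and (c) do when they each pay a full $+M$ --- and it is exactly this sharing of the $M$ budget, combined with substituting $|\Delta_{\ell+1}\setminus B|\ge|Z\setminus B|-p_1$ back into the bound on $|Z\setminus B|$ so that $|Z\setminus B|$ appears on both sides and can be solved for, that produces the $(\ell+2)$ denominator. Your parameter $W^*$ mixes paths that pass through $Z\setminus B$ with those that do not, which is why it cannot be tied to $p_1,p_2$ tightly enough. (Separately: your constraint $|T|+|S\cap W^*|\le L$ is not quite right as stated, since $T$ and $S\cap W^*$ can intersect --- a $Z$-vertex can coincide with a $W$-endpoint of its own path --- in which case you double-count; this is fixable, but worth noting.)
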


\begin{rem}\label{easier_path}
  The proof of \cref{z_w_bound} requires some extra effort to obtain $\ell+2$ in the denominator. The reader who is not interested in precise constants can already stop at \cref{z_b_bound} in the proof, from which the bound
  \[
    |W\setminus B|+|Z\setminus B|\le \frac{2|A\setminus B|}{\ell+1} + 3|A\cap B|
  \]
  follows immediately.
  This weaker bound is still sufficient to prove \cref{main_result} with the constant $c< 69$.
\end{rem}

\begin{proof}
  Let $W_{-1}:=\emptyset$ and, for each $i\in\{0,\ldots,\ell+1\}$, let $\Delta_{i}:=W_{i}\setminus W_{i-1}$ (as in the definition of $W$-sequence).
  By the definition of $W$-sequence, $G[W_{i}]$ contains a set $\mathcal{P}_i$ of $|W\setminus B|$ pairwise vertex-disjoint $\Delta_i$-$(W\setminus B)$ paths, for each $i\in\{0,\ldots,\ell\}$.

  We begin by bounding $|W\setminus B|$ using the path sets $\mathcal{P}_0,\ldots,\mathcal{P}_{\ell}$.  For each $i\in\{0,\ldots,\ell\}$, let $\mathcal{Q}_i\subseteq \mathcal{P}_i$ contain the paths in $\mathcal{P}_i$ that begin at a vertex in $A\setminus B$ and let $\overline{\mathcal{Q}}_i:=\mathcal{P}_i\setminus \mathcal{Q}_i$ contain the paths in $\mathcal{P}_i$ that begin at a vertex in $B$.  Since each path in $\mathcal{Q}_i$ begins at a distinct vertex in $\Delta_i\setminus B$, $|\mathcal{Q}_i|\le|\Delta_i\setminus B|$. Each path in $\overline{\mathcal{Q}}_i$ begins at a vertex in $B$ and ends at a vertex in $W\setminus B$. Since $(A,B)$ is a separation of $G[W_{\ell+1}]$, this implies that each path in $\overline{\mathcal{Q}}_i$ contains a vertex in $A\cap B$.  Since the paths in $\overline{\mathcal{Q}}_i$ are pairwise vertex-disjoint, $|\overline{\mathcal{Q}}_i|\le |A\cap B|$.

  Each vertex $w\in W\setminus B$ is the last vertex of exactly one path in $\mathcal{P}_i$, for each $i\in\{0,\ldots,\ell\}$. Thus, each vertex $w\in W\setminus B$ is the endpoint of exactly $\ell+1$ paths in $\bigcup_{i=0}^\ell\mathcal{P}_i$.  Since $\{\Delta_0,\ldots,\Delta_{\ell+1}\}$ is a partition of $W_{\ell+1}$, we have:
  \begin{align}
  \begin{split}
    |W\setminus B|
      & = \frac{1}{\ell+1}\cdot\sum_{i=0}^{\ell}|\mathcal{P}_i| \\
      & = \frac{1}{\ell+1}\cdot\sum_{i=0}^{\ell}\left(|\mathcal{Q}_i|+|\overline{\mathcal{Q}_i}|\right) \\
      & \le \frac{1}{\ell+1}\cdot\sum_{i=0}^{\ell}\left(|\Delta_i\setminus B|+|A\cap B|\right) \\
      & = \frac{|W_\ell\setminus B|}{\ell+1} + |A\cap B| \\
      & = \frac{|A\setminus B|-|\Delta_{\ell+1}\setminus B|}{\ell+1} + |A\cap B| \enspace .  \label{w_b_bound}
  \end{split}
  \end{align}

Next, we bound $|Z\setminus B|$.  By the definition of $W$-sequence, $G[W_{\ell+1}]$ has a set $\mathcal{R}$ of $|\Delta_{\ell+1}|=|Z|$ pairwise vertex-disjoint $\Delta_{\ell+1}$-$W$ paths.  Let $\mathcal{P}^\star:=\{P\in\mathcal{R}: V(P)\cap (Z\setminus B)\neq\emptyset\}$.
Since each of the paths in $\mathcal{R}$ contains a distinct vertex in $Z$, $|\mathcal{P}^\star|=|Z\setminus B|$.  Partition $\mathcal{P}^\star$ into three sets:

  \begin{enumerate}[nosep,nolistsep]\setcounter{enumi}{-1}
      \item $\mathcal{P}^\star_0$ are the paths in $\mathcal{P}^\star$ that start at a vertex of $\Delta_{\ell+1}\setminus B$ and end at a vertex in $W\setminus B$.
      \item $\mathcal{P}^\star_1$ are the paths in $\mathcal{P}^\star$ that start at a vertex in $\Delta_{\ell+1}\cap B$ and end at a vertex in $W\setminus B$.
      \item $\mathcal{P}^\star_2$ are the paths in $\mathcal{P}^\star$ that start at a vertex in $\Delta_{\ell+1}\setminus B$ and end at a vertex in $W\cap B$.
  \end{enumerate}
  Since the paths in $\mathcal{P}_0^\star\cup\mathcal{P}_1^\star$ are pairwise vertex-disjoint and each contains a vertex of $W\setminus B$, $|\mathcal{P}_0^\star|+|\mathcal{P}_1^\star|\le|W\setminus B|$.
  Each path in $\mathcal{P}_1^\star\cup \mathcal{P}_2^\star$ contains a vertex in $Z\setminus B$ and a vertex in $B$.  Since $(A,B)$ is a separation of $G[W_{\ell+1}]$ this implies that each path in $\mathcal{P}_1^\star\cup \mathcal{P}_2^\star$ contains a vertex in $A\cap B$. Since the paths in  $\mathcal{P}_1^\star\cup \mathcal{P}_2^\star$ are pairwise vertex-disjoint, this implies that $|\mathcal{P}_1^\star\cup \mathcal{P}_2^\star|\le |A\cap B|$. Therefore,
  \begin{equation}
     |Z\setminus B| = |\mathcal{P}^\star| = |\mathcal{P}_0^{\star}| + |\mathcal{P}_1^\star| + |\mathcal{P}_2^{\star}|
     \le |W\setminus B| + |\mathcal{P}_2^\star|
     \le \frac{|A\setminus B|-|\Delta_{\ell+1}\setminus B|}{\ell+1}+|A\cap B|+|\mathcal{P}_2^\star|
     \enspace , \label{coolio}
  \end{equation}
  where the last inequality is an application of inequality~\eqref{w_b_bound}.  At this point, adding \eqref{w_b_bound} and \eqref{coolio} and using the inequality $|\mathcal{P}_2^\star| \le |\mathcal{P}_1^\star\cup\mathcal{P}_2^\star|\le |A\cap B|$ immediately gives the bound discussed in \cref{easier_path}.  With a bit more work, we can do better.  Since each path in $\mathcal{P}_0^\star\cup\mathcal{P}^\star_2$ starts at a distinct vertex in $\Delta_{\ell+1}\setminus B$,
  \begin{equation}
      |\Delta_{\ell+1}\setminus B|\ge |\mathcal{P}_0^\star|+|\mathcal{P}^\star_2|=|Z\setminus B|-|\mathcal{P}_1^\star| \enspace . \label{z_b_trick}
  \end{equation}
  Using inequality~\eqref{z_b_trick}  in \cref{coolio} we obtain
  \[
     |Z\setminus B| \le \frac{|A\setminus B|-|Z\setminus B|+|\mathcal{P}_1^\star|}{\ell+1}+|A\cap B|+|\mathcal{P}_2^\star| \enspace .
  \]
  Rewriting this equation to isolate $|Z\setminus B|$, we obtain
  \begin{align}
     |Z\setminus B|
      & \le \frac{|A\setminus B|+|\mathcal{P}_1^\star|}{\ell+2} + \left(\frac{\ell+1}{\ell+2}\right)\cdot
        \left(|A\cap B|+|\mathcal{P}_2^\star|\right) \\
      & < \frac{|A\setminus B|}{\ell+2} +
        |A\cap B|+|\mathcal{P}_2^\star|+\tfrac{1}{3} |\mathcal{P}_1^\star|
        \enspace . \label{z_b_bound}
  \end{align}
  (The second inequality uses the assumption that $\ell\ge 1$.) Using inequality~\eqref{z_b_trick}  in \cref{w_b_bound} we obtain,
  \begin{equation}
      |W\setminus B|+\frac{|Z\setminus B|}{\ell+1}\le \frac{|A\setminus B|+|\mathcal{P}_1^\star|}{\ell+1} + |A\cap B|
      \le \frac{|A\setminus B|}{\ell+1} + |A\cap B| +\tfrac{1}{2}|\mathcal{P}_1^\star| \enspace .  \label{w_b_bound_two}
  \end{equation}
  (The second inequality again uses the assumption that $\ell\ge 1$.) Adding \eqref{w_b_bound_two} and \eqref{z_b_bound} and using the fact that $\tfrac{5}{6}|\mathcal{P}_1^\star|+|\mathcal{P}_2^\star|\le|\mathcal{P}_1^\star|+|\mathcal{P}_2^\star|\le |A\cap B|$, we obtain
  \begin{align}
    |W\setminus B| + \left(\frac{\ell+2}{\ell+1}\right)\cdot|Z\setminus B|
    & \le \frac{|A\setminus B|}{\ell+1} +
    \frac{|A\setminus B|}{\ell+2} + 3\,|A\cap B| \label{sum_bound}
  \end{align}
  We can now upper bound $|W\setminus B|+|Z\setminus B|$ by maximizing $x_0+x_1$ subject to
  \begin{equation}
     x_0+\left(\frac{\ell+2}{\ell+1}\right)\cdot x_1 \le R \enspace , \label{sum_bound_two}
  \end{equation}
  where $R$ denotes the expression in \eqref{sum_bound}.
  For a fixed $x_0=x^\star_0$ the maximum value of $x_1$ that satisfies \eqref{sum_bound_two} is
  \[
    x_1 = x_1^\star:=\left(\frac{\ell+1}{\ell+2}\right)\cdot(R-x^{\star}_0)
  \]
  in which case
  \[
     x_0 + x_1 = x^\star_0+x^\star_1 = \left(\frac{\ell+1}{\ell+2}\right)\cdot R + \frac{x^\star_0}{\ell+2}
  \]
  maximizes $x_0+x_1$ subject to fixed $x_0=x_0^{\star}$.
  This is an increasing linear function of $x_0^{\star}$.  From \eqref{w_b_bound}, we have the constraint
  \[
     x_0^\star \le
     \frac{|A\setminus B|-|\Delta_{\ell+1}\setminus B|}{\ell+1}+|A\cap B|
     \le \frac{|A\setminus B|}{\ell+1}+|A\cap B|
  \]
  from which we obtain the upper bound
  \begin{align*}
    |W\setminus B|+|Z\setminus B|
    & \le \left(\frac{\ell+1}{\ell+2}\right)\cdot R +
    \frac{x_0^\star}{\ell+2} \\
    & \le \left(\frac{\ell+1}{\ell+2}\right)\left(\frac{|A\setminus B|}{\ell+1}+\frac{|A\setminus B|}{\ell+2}+3\,|A\cap B|\right)+\frac{|A\setminus B|-|Z\setminus B|}{(\ell+1)
    (\ell+2)}
    + \frac{|A\cap B|}{\ell+2}
    \\
    & = \frac{|A\setminus B|}{\ell+2}\cdot\left(\frac{\ell+1}{\ell+1}+\frac{\ell+1}{\ell+2}+\frac{1}{\ell+1}\right) +\left(\frac{3\ell+4}{\ell+2}\right)\cdot |A\cap B|  \\
    & = \frac{|A\setminus B|}{\ell+2}\cdot\left(2-\frac{1}{\ell+2}+\frac{1}{\ell+1}\right) +\left(\frac{3\ell+4}{\ell+2}\right)\cdot |A\cap B|  \\
    & = \frac{|A\setminus B|}{\ell+2}\cdot\left(2+\frac{1}{(\ell+1)(\ell+2)}\right) +\left(\frac{3\ell+4}{\ell+2}\right)\cdot |A\cap B|  \\
    & < \frac{(2+\tfrac{1}{6})\,|A\setminus B|}{\ell+2} + 3\,|A\cap B| \enspace . \qedhere
  \end{align*}
\end{proof}

We are now ready to prove \cref{main_result}.

\begin{proof}[Proof of \cref{main_result}]
  Let $G$ be a graph and let $a:=\sep(G)$.  Let
  \[
    h:=4 \quad\text{and}\quad t:=\frac{4h}{1-(2+\tfrac{1}{6})\cdot(\tfrac{2}{3})^{h}} = \frac{3888}{139} < 27.972 \enspace .
  \]
  We will show that $\tw(G)< (2t+1)a$. The proof is by induction on the number of vertices of $G$. We will prove the following stronger statement: For any graph $G$ and any non-empty subset $W\subseteq V(G)$ of size at most $ta$, $G$ has a tree decomposition $(B_x:x\in V(T))$ of width less than $(2t+1)a$ in which $W\subseteq B_x$ for some $x\in V(T)$.

  If $G$ has less than $ta$ vertices, then the proof is trivial. We use a tree $T$ with a single vertex $x$ and set $B_x:=V(G)$.  We now assume that $|V(G)|\ge ta$.
  By \cref{w_sequence}, $G$ has a $W$-sequence $W_0,\ldots,W_{\ell+1}$ of width $|W|$.   Let $\Delta_0,\ldots,\Delta_{\ell+1}$ and $Z$ be as in the definition of $W$-sequence.  Let $(X,Y)$ be a $(V(G)\setminus W_{\ell},Z,W)$-separation.

  Since $|X\cap Y|=|Z|<|W|\le ta$, the inductive hypothesis implies that $G[X]$ has a tree decomposition $\mathcal{T}_X:=(B_x:x\in V(T_X))$ of width less than $(2t+1)a$ in which $Z\subseteq B_x$ for some $x \in V(T_X)$. To finish the proof, we construct a tree decomposition $\mathcal{T}_Y:=(B_y:y\in V(T_Y))$ of $G[Y]$ of width less than $(2t+1)a$ in which some bag $B_{y}$ contains $W\cup Z$.  Then the tree $T$ obtained by joining $T_Y$ and $T_X$ using the edge $x y$ gives the desired tree decomposition $\mathcal{T}:=(B_x:x\in V(T))$.

  If $\ell=0$ then we use the trivial tree decomposition in which $T_Y$ has a single node $y$ where $B_{y}:=W\cup Z$.  Since $W\cup Z\subseteq Y$ and $|Y|=|W_1|=|W|+|Z|< 2|W|<(2t+1)a$, this decomposition has width less than $(2t+1)a$. We now assume that $\ell \ge  1$.

  Since $\ell \ge 1$, $W_0,\ldots,W_{\ell+1}$ satisfies the conditions of \cref{z_w_bound}.
  Let $\mathcal{T}'_Y:=(B'_y:y\in V(T'_Y))$ be the tree decomposition of $G[W_{\ell+1}]$ obtained by applying \cref{separation_tree} to $G[W_{\ell+1}]$ with the height $h$ defined above.  The following claim will be used to bound the width of a tree decomposition that we derive from $\mathcal{T}'_Y$.

  \begin{clm} \label{cell_bound}
     For each $d\in\{0,\ldots,h\}$ and
     each node $y$ of $T'_y$ with $|\interior_{\mathcal{T}'_Y}(y)|\le (\tfrac{2}{3})^d\cdot |W_{\ell+1}|$,
     \begin{equation}
       |\interior_{\mathcal{T}'_Y}(y)\cap (W\cup Z)|  \le (2+\tfrac{1}{6})ta\cdot (\tfrac{2}{3})^{d}+3da \label{t_bound}
     \end{equation}
   \end{clm}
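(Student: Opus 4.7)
The plan is to reduce the claim to a single application of \cref{z_w_bound}, applied to a separation of $G[W_{\ell+1}]$ read off from the bag structure of $\mathcal{T}'_Y$ near (an ancestor of) $y$.

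The complication is that the hypothesis of the claim restricts $|\interior_{\mathcal{T}'_Y}(y)|$ but says nothing about $\depth_{T'_Y}(y)$, so in general $|\boundary_{\mathcal{T}'_Y}(y)|$ need not be bounded by $d a$. I would therefore first pass from $y$ to its ancestor $y'$ at depth $\min\{d,\depth_{T'_Y}(y)\}$: when $\depth(y)\le d$, take $y'=y$ and use the hypothesis to bound $|\interior_{\mathcal{T}'_Y}(y')|$; when $\depth(y)>d$, take $y'$ to be the unique ancestor of $y$ at depth exactly $d$ and use property~(ii) of \cref{separation_tree} applied to $G[W_{\ell+1}]$ to bound $|\interior_{\mathcal{T}'_Y}(y')|$. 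In both cases we get $|\interior_{\mathcal{T}'_Y}(y')|\le(\tfrac{2}{3})^d |W_{\ell+1}|$ and $|\boundary_{\mathcal{T}'_Y}(y')|\le d a$, and the inclusion $\interior_{\mathcal{T}'_Y}(y)\subseteq\interior_{\mathcal{T}'_Y}(y')$ already recorded in the Preliminaries lets us transfer any upper bound on $|\interior_{\mathcal{T}'_Y}(y')\cap(W\cup Z)|$ to one on $|\interior_{\mathcal{T}'_Y}(y)\cap(W\cup Z)|$.

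With $y'$ chosen, I would build the separation $(A,B)$ of $G[W_{\ell+1}]$ by setting $A:=\interior_{\mathcal{T}'_Y}(y')\cup\boundary_{\mathcal{T}'_Y}(y')$ and $B:=W_{\ell+1}\setminus\interior_{\mathcal{T}'_Y}(y')$. A standard tree-decomposition argument (each edge lies in some bag, and that bag is either inside or outside $T'_{Y,y'}$; any vertex shared between the two sides must lie on the connecting path through $y'$) shows $(A,B)$ is a valid separation with $A\setminus B=\interior_{\mathcal{T}'_Y}(y')$ and $A\cap B=\boundary_{\mathcal{T}'_Y}(y')$. Since $W,Z\subseteq W_{\ell+1}$, we have $W\setminus B=W\cap\interior_{\mathcal{T}'_Y}(y')$ and $Z\setminus B=Z\cap\interior_{\mathcal{T}'_Y}(y')$, so \cref{z_w_bound} (whose $\ell\ge 1$ hypothesis is available from the surrounding case analysis in the proof of \cref{main_result}) gives
\[
  |\interior_{\mathcal{T}'_Y}(y')\cap(W\cup Z)|\;\le\;|W\setminus B|+|Z\setminus B|\;\le\;\frac{(2+\tfrac{1}{6})\,|\interior_{\mathcal{T}'_Y}(y')|}{\ell+2}+3\,|\boundary_{\mathcal{T}'_Y}(y')|.
\]
Combining this with $|W_{\ell+1}|<(\ell+2)|W|\le(\ell+2)\,t\,a$ to cancel the $\ell+2$ in the denominator, the boundary bound $|\boundary_{\mathcal{T}'_Y}(y')|\le d a$, and finally the inclusion $\interior_{\mathcal{T}'_Y}(y)\subseteq\interior_{\mathcal{T}'_Y}(y')$ yields the desired bound \eqref{t_bound}.

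The main obstacle I anticipate is exactly the depth-versus-interior-size bookkeeping above: the hypothesis controls only the size of $\interior(y)$, so \cref{z_w_bound} cannot be applied directly at $y$ without risking a boundary term larger than $3 d a$. Passing to the ancestor $y'$ at depth $\min\{d,\depth(y)\}$ and invoking $\interior(y)\subseteq\interior(y')$ is the small but essential move that aligns both sides of \cref{z_w_bound} with the quantities appearing in \eqref{t_bound}; after that, only the simple estimate $|W_{\ell+1}|\le(\ell+2)\,t\,a$ (from the definition of $W$-sequence together with $|W|\le t a$) is needed to finish.
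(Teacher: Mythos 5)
Your proof is correct and follows essentially the same route as the paper: form the tree-decomposition separation $(A,B)$ of $G[W_{\ell+1}]$ at a node, apply \cref{z_w_bound}, and finish using $|W_{\ell+1}|<(\ell+2)|W|\le(\ell+2)ta$ together with the interior and boundary bounds from \cref{separation_tree}. Your extra step of passing to the ancestor $y'$ at depth $\min\{d,\depth_{T'_Y}(y)\}$ is a genuine, if small, refinement: the paper applies \cref{z_w_bound} directly at $y$ and tacitly uses $|A\cap B|=|\boundary_{\mathcal{T}'_Y}(y)|\le da$, which presumes $\depth_{T'_Y}(y)\le d$---an assumption that holds at both places \cref{cell_bound} is invoked (leaves with $d=h$, non-leaves with $d=\depth(y)$) but is not part of the claim's hypotheses---whereas your version, via $\interior_{\mathcal{T}'_Y}(y)\subseteq\interior_{\mathcal{T}'_Y}(y')$, proves the claim unconditionally as stated.
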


  \begin{clmproof}
  Consider the separation
  \[
    (A,B):=(\interior_{\mathcal{T}'_Y}(y)\cup\boundary_{\mathcal{T}'_Y}(y),W_{\ell+1}\setminus\interior_{\mathcal{T}'_Y}(y))
  \]
  of $G[W_{\ell+1}]$ and observe that $A\setminus B:=\interior_{\mathcal{T}'_Y}(y)$.
  By \cref{z_w_bound},

  \begin{align*}
  |\interior_{\mathcal{T}'_Y}(y)\cap (W\cup Z)|
  & = |(A\setminus B)\cap (W\cup Z)| \\
  & \le |W\setminus B| + |Z\setminus B| \\
  & \le
     \frac{(2+\tfrac{1}{6})\,|A\setminus B|}{\ell+2} + 3\,|A\cap B| \\
  & \le \frac{(2+\tfrac{1}{6})\,(\tfrac{2}{3})^d|W_{\ell+1}|}{\ell+2} + 3\,|A\cap B| \\
  & = \frac{(2+\tfrac{1}{6})\,(\tfrac{2}{3})^d((\ell+1)|W|+|\Delta_{\ell+1}|)}{\ell+2} + 3\,|A\cap B| \\
  & < \frac{(2+\tfrac{1}{6})\,(\tfrac{2}{3})^d(\ell+2)|W|}{\ell+2} + 3\,|A\cap B| \\
  & \le (2+\tfrac{1}{6})ta\cdot(\tfrac{2}{3})^d + 3da \enspace . \qedhere
  \end{align*}
  \end{clmproof}

 For each node $y$ of $T'_y$, define $B_y:=(B'_y\cap Y) \cup (\interior_{\mathcal{T}'_Y}(y)\cap (W\cup Z))$ and let $\mathcal{T}''_Y:=(B_y:y\in V(T'_y))$.  By \cref{restricted_decomp} (applied with the separation $(X\cup W,Y\cup W)$), $\mathcal{T}''_Y$ is a tree decomposition of $G[Y]$ in which the root bag contains $(X\cup W)\cap (Y\cup W)=Z\cup W$.

 \begin{clm}\label{leaf_interface}
    For each leaf $y$ of $T'_Y$, $|\boundary_{\mathcal{T}''_Y}(y)|\le ta$.
 \end{clm}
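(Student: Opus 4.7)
The plan is to compute $\boundary_{\mathcal{T}''_Y}(y) = B_y \cap B_{y'}$ for the leaf $y$ with parent $y'$ in $T'_Y$, then decompose it into two disjoint pieces whose sizes we can bound using the two results already in hand: \cref{separation_tree} and \cref{cell_bound}. If $y$ happens to be the root of $T'_Y$, then by definition $\boundary_{\mathcal{T}''_Y}(y) = \emptyset$ and the claim holds trivially, so we may assume $y$ has a parent.

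First I would establish the set identity
\[
  \boundary_{\mathcal{T}''_Y}(y) = (\boundary_{\mathcal{T}'_Y}(y) \cap Y) \cup (\interior_{\mathcal{T}'_Y}(y) \cap (W \cup Z)),
\]
whose two summands are disjoint since interior and boundary are disjoint by definition. The ingredients that make this identity go through are: (i) for the leaf $y$ we have $\interior_{\mathcal{T}'_Y}(y) = B'_y \setminus B'_{y'}$; (ii) the ancestor inclusion $\interior_{\mathcal{T}'_Y}(y) \subseteq \interior_{\mathcal{T}'_Y}(y')$; and (iii) $W \cup Z \subseteq Y$. The $\supseteq$ direction is immediate from the definitions of $B_y$ and $B_{y'}$. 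The $\subseteq$ direction needs a short case analysis on which of the two summands of $B_y$ and of $B_{y'}$ contribute a given $v$; the leaf property (i) collapses the only ``mixed'' case (a vertex coming from $B'_y \cap Y$ on one side and from $\interior_{\mathcal{T}'_Y}(y') \cap (W \cup Z)$ on the other) back into the second summand above.

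Next, \cref{separation_tree}\ref{size_bounds} gives $|\boundary_{\mathcal{T}'_Y}(y)| \le \depth_{T'_Y}(y)\cdot a \le ha$. For the other piece, \cref{separation_tree}\ref{leaf_size_bounds} supplies $|\interior_{\mathcal{T}'_Y}(y)| \le (\tfrac{2}{3})^h |W_{\ell+1}|$, which is exactly the hypothesis needed to apply \cref{cell_bound} with $d = h$, giving $|\interior_{\mathcal{T}'_Y}(y) \cap (W \cup Z)| \le (2+\tfrac{1}{6})ta(\tfrac{2}{3})^h + 3ha$. Summing these bounds produces
\[
  |\boundary_{\mathcal{T}''_Y}(y)| \le ha + 3ha + (2+\tfrac{1}{6})ta(\tfrac{2}{3})^h = 4ha + (2+\tfrac{1}{6})ta(\tfrac{2}{3})^h,
\]
and the choice $t = 4h/(1-(2+\tfrac{1}{6})(\tfrac{2}{3})^h)$ is precisely what rearranges this upper bound to $ta$.

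I expect the only real obstacle to be the first step---establishing the set identity for $\boundary_{\mathcal{T}''_Y}(y)$---because a vertex $v \in B_y \cap B_{y'}$ can in principle arise from any of four combinations of the two-part definitions of $B_y$ and $B_{y'}$, and one must verify that the leaf property of $y$ forces each such combination to land in one of the two summands. The subsequent bounds and arithmetic are direct applications of \cref{separation_tree} and \cref{cell_bound} combined with the choice of $t$.
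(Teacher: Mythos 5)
Your proof is correct and follows essentially the same route as the paper's: split $\boundary_{\mathcal{T}''_Y}(y)$ into the piece coming from $\boundary_{\mathcal{T}'_Y}(y)$ and the piece coming from $\interior_{\mathcal{T}'_Y}(y)\cap(W\cup Z)$, then apply \cref{separation_tree}\ref{size_bounds}, \cref{separation_tree}\ref{leaf_size_bounds}, and \cref{cell_bound} with $d=h$ and conclude by the choice of $t$. The only cosmetic difference is that you prove an equality (intersecting the boundary piece with $Y$) where the paper settles for the inclusion $\boundary_{\mathcal{T}''_Y}(y)\subseteq\boundary_{\mathcal{T}'_Y}(y)\cup(\interior_{\mathcal{T}'_Y}(y)\cap(W\cup Z))$, which is all the bound requires.
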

 \begin{proof}
   Let $y$ be a leaf of $T'_Y$ and let $z$ be the parent of $y$.  Then, by \cref{separation_tree}, $|\interior_{\mathcal{T}'_Y}(y)|\le (\tfrac{2}{3})^h\cdot |W_{\ell+1}|$ and $\boundary_{\mathcal{T}_Y''}(x)=B_y\cap B_z\subseteq (B'_y\cap B'_z)\cup(\interior_{\mathcal{T}'_Y}(y)\cap (W\cup Z)) = \boundary_{\mathcal{T}_Y'}(y)\cup(\interior_{\mathcal{T}'_Y}(y)\cap (W\cup Z))$. Therefore, by \cref{separation_tree} and \cref{cell_bound}
   \[
     |\boundary_{\mathcal{T}''_Y}(y)| \le |\boundary_{\mathcal{T}'_Y}(y)| + |\interior_{\mathcal{T}'_Y}(y)\cap (W\cup Z)| \le ha + (2+\tfrac{1}{6})ta\cdot(\tfrac{2}{3})^{h} + 3ha =  (2+\tfrac{1}{6})ta\cdot(\tfrac{2}{3})^{h} + 4ha\enspace .
   \]
   The values of $h$ and $t$, defined above, are chosen so that the right hand side of this inequality is equal to $ta$.
 \end{proof}

  By \cref{leaf_interface}, $|\boundary_{\mathcal{T}''_Y}(y)|\le ta$ for each leaf $y$ of $T_Y'$. Therefore, by the inductive hypothesis, $G[\interior_{\mathcal{T}''_Y}(y)\cup\boundary_{\mathcal{T}''_Y}(y)]$ has a tree decomposition $\mathcal{T}^y:=(B_y:y\in V(T^y))$ of width less than $(2t+1)a$ in which some bag $B_{y_0}$ contains  $\boundary_{\mathcal{T}''_Y}(y)$, for each leaf $y$ of $T_Y'$. Create a new tree $T_Y$ from $T_Y'$ by replacing each leaf $y$ of $T_Y'$ with the node $y_0$ from the tree $T^y$.  Then $\mathcal{T}_Y:=(B_y:y\in V(T_Y))$ is a tree decomposition of $G[Y]$.
  \begin{clm}\label{treewidth_bound}
     The width of $\mathcal{T}_Y$ is less than $(2t+1)a$.
  \end{clm}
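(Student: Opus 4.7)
The plan is to bound $|B_y|$ for every node $y\in V(T_Y)$, partitioning the nodes of $T_Y$ into three classes according to their origin: (i)~nodes that arise from one of the recursive subtrees $T^y$ attached in place of a leaf of $T'_Y$; (ii)~internal nodes of $T'_Y$ at depth $d\in\{1,\ldots,h-1\}$; and (iii)~the root $y_0$ of $T'_Y$.

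For class~(i), the inductive hypothesis returned a tree decomposition of width less than $(2t+1)a$, so the bound is immediate.

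For class~(ii), \cref{separation_tree} gives $|B'_y|\le|\boundary_{\mathcal{T}'_Y}(y)|+a\le (d+1)a$ (the bag is the parent's boundary augmented by a separator of size at most $a$) together with $|\interior_{\mathcal{T}'_Y}(y)|\le(\tfrac{2}{3})^d|W_{\ell+1}|$. Applying \cref{cell_bound} with this value of $d$,
\[
  |\interior_{\mathcal{T}'_Y}(y)\cap(W\cup Z)|\le (2+\tfrac{1}{6})ta\cdot(\tfrac{2}{3})^d+3da.
\]
Combining with $B_y\subseteq B'_y\cup(\interior_{\mathcal{T}'_Y}(y)\cap(W\cup Z))$,
\[
  |B_y|\le (4d+1)a + (2+\tfrac{1}{6})ta\cdot(\tfrac{2}{3})^d.
\]
I would then verify directly that the right-hand side is at most $(2t+1)a$ for each $d\in\{1,2,3\}$; the tightest constraint occurs at $d=h-1=3$ and reduces to $t\ge 972/110$, which the chosen value $t=3888/139$ easily satisfies.

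Class~(iii) requires separate care because the estimate from \cref{cell_bound} degenerates at $d=0$ (it gives $(2+\tfrac{1}{6})ta>2ta$). Instead, I would use the direct identity $\interior_{\mathcal{T}'_Y}(y_0)\cap(W\cup Z)=W\cup Z$ together with the trivial bound $|W\cup Z|\le|W|+|Z|<2|W|\le 2ta$, where $|Z|<|W|$ follows from property~\ref{remainder} of a $W$-sequence. Combined with $|B'_{y_0}|\le a$, this yields $|B_{y_0}|<(2t+1)a$.

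The only real subtlety is this last case: the generic bound from \cref{cell_bound} breaks down for the root, and one has to fall back on the structural fact $|Z|<|W|$ coming from the definition of a $W$-sequence. Once the root is handled, the remainder is a routine algebraic check with the defined values of $h$ and $t$.
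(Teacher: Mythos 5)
Your proposal is correct and follows essentially the same route as the paper: the paper likewise splits nodes into bags inherited from the inductive subtrees, non-leaf nodes of $T'_Y$ at depth $d\ge 1$ (bounded via \cref{cell_bound} by $(2+\tfrac{1}{6})ta\cdot(\tfrac{2}{3})^d+(4d+1)a$), and the root at $d=0$ (handled separately via $|W\cup Z|<2ta$, using $|Z|<|W|$). Your explicit observation that the binding constraint occurs at $d=h-1=3$ and requires only $t\ge 972/110$ is a nice verification of what the paper leaves as a routine check, but it does not change the argument.
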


 \begin{clmproof}
    The inductive hypothesis ensures that all bags of the tree decomposition have size at most $(2t+1)a$ except for those associated with non-leaf nodes of $T'_Y$.
    Let $y$ be a non-leaf node in $T_Y'$ whose depth is $d < h$. If $d = 0$, then
    \[
      |B_y| \le |W\cup Z| + |B'_y| \le (2ta-1)+a < (2t+1)a \enspace .
    \]
    If $d\ge 1$ then, by \cref{separation_tree}, $|B'_y|\le (d+1)a$ and $|\interior_{\mathcal{T}_Y'}(y)|\le |W_\ell|\cdot(\tfrac{2}{3})^{d}$.
    By \cref{cell_bound},
    \[
      |B_y| \le |\interior_{\mathcal{T}_Y'}(y)\cap (W\cup Z)| + |B'_y|
      \le (2+\tfrac{1}{6})ta  \cdot (\tfrac{2}{3})^{d}+3da + (d+1)a
      \le (2+\tfrac{1}{6})ta\cdot (\tfrac{2}{3})^{d}+(4d+1)a \enspace .
    \]
    With the choices of $t$ and $h$ above, the right hand side of this equation is less than $(2t+1)a$ for all $d\in\{1,\ldots,h-1\}$.
  \end{clmproof}
  Let $y_r$ be the root of $T_Y$. Then, $B_{y_r}:=B'_{y_r}\cap Y\supseteq W\cup Z$.  Therefore, $\mathcal{T}_Y=(B_y:y\in V(T_Y))$ is a tree decomposition of $G[Y]$ that (by \cref{treewidth_bound}) has width less than $(2t+1)a$ and there exists $y\in V(T_Y)$ such that $W\cup Z\subseteq B_{y}$.  This completes the proof.
\end{proof}

\bibliographystyle{plainurlnat}
\bibliography{dnr}

\end{document}